\documentclass{amsart}

\usepackage{amsmath,amssymb,latexsym}

\numberwithin{equation}{section}

\newtheorem{theorem}{Theorem}
\newtheorem{corollary}{Corollary}
\theoremstyle{definition}
\newtheorem{definition}{Definition}
\newtheorem{problem}{Open Problem}

\newcommand{\Mod}[1]{\ (\mathrm{mod}\ #1)}

\newcommand{\fr}{\frac}

\DeclareMathOperator{\mex}{mex}

\newmuskip\pFqskip
\pFqskip=6mu
\mathchardef\pFcomma=\mathcode`, 
\newcommand*\pFq[5]{%
  \begingroup
  \begingroup\lccode`~=`,
  \lowercase{\endgroup\def~}{\pFcomma\mkern\pFqskip}%
  \mathcode`,=\string"8000
  {}_{#1}\phi_{#2}\biggl[\genfrac..{0pt}{}{#3}{#4};#5\biggr]%
  \endgroup
}
\newmuskip\pGqskip
\pGqskip=6mu
\mathchardef\pGcomma=\mathcode`, 

\title{Two-color partitions with evens in one color}

\author{George E. Andrews}
\address{The Pennsylvania State University, University Park, Pennsylvania 16802}
\email{andrews@math.psu.edu}
\thanks{First author partially supported by Simons Foundation Grant 633284.}

\author{Mohamed El Bachraoui}
\address{Dept. Math. Sci, United Arab Emirates University, PO Box 15551, Al-Ain, UAE}
\email{melbachraoui@uaeu.ac.ae}

\date{December 30, 2025}
\subjclass[2000]{11P81; 05A17; 11D09}
\keywords{integer partitions, two-color partitions, minimal excludant, $q$-series}

\begin{document}

\begin{abstract}
We consider sequences counting integer partitions in two colors (red and blue) in which the even parts occur only in blue color. We focus on subsequences defined by constraints on the parity and color of the summands. We establish formulas for our sequences and deduce identities of integer partitions.
\end{abstract}

\maketitle

\section{Introduction}
We adopt the following standard notation from the theory of $q$-series~\cite{Andrews, Gasper-Rahman}
\[
(a;q)_0 = 1,\  (a;q)_n = \prod_{j=0}^{n-1} (1-aq^j),\quad
(a;q)_{\infty} = \prod_{j=0}^{\infty} (1-aq^j),
\]
\[
(a_1,\ldots,a_k;q)_n = \prod_{j=1}^k (a_j;q)_n,\
(a_1,\ldots,a_k;q)_{\infty} = \prod_{j=1}^k (a_j;q)_{\infty},
\]
and
\[
\pFq{2}{1}{a,b}{c}{q, z}:= \sum_{n\geq 0} \fr{(a,b;q)_n z^n}{(q,c;q)_n}.
\]
In this paper we consider sequences of integer partitions in two colors (red and blue)
with conditions on the parity and color of their summands.
We shall write $\lambda_b$ (resp. $\lambda_r$) to denote a part $\lambda$ occurring in blue (resp. red) color
assuming the following order $\lambda_b \geq \lambda_r$.
For references on two-color partitions, see for instance~\cite{Andrews 1987, Andrews 2021, Andrews-Bachraoui 2-color-spt, Andrews-Kumar}.
All of our sequences come from the following set.
\begin{definition}\label{def F-set}
For any nonnegative integer $n$, let $\mathcal{F}(n)$ be the set of two-color integer partitions $n$
such that the even parts may occur only in blue color and let $F(n)=|\mathcal{F}(n)|$.
Then we clearly have
\begin{equation}\label{gen F}
\sum_{n\geq 0} F(n) q^n = \fr{1}{(q;q^2)_\infty^2 (q^2;q^2)_\infty} =1+2q+4q^2+8q^3+14q^4+24 q^5 +\cdots.
\end{equation}
\end{definition}
\begin{definition}\label{def F0-3}
For a nonnegative integer $n$,
let $F_0(n)$ (resp. $F_1(n)$) be the number of partitions
in $\mathcal{F}(n)$ in which the number of odd parts in red color is even (resp. odd).
Furthermore, let $F_2(n)$ (resp. $F_3(n)$) be the number of partitions
in $\mathcal{F}(n)$ in which the number of even parts is even (resp. odd).
\end{definition}
Our first goal in this paper is to establish the following two results regarding $F_0(n)$ and $F_1(n)$.
\begin{theorem}\label{thm mainF0}
There holds
\[
\sum_{n\geq 0} F_0(n) q^n = \fr{(q^{16},-q^6,-q^{10};q^{16})_\infty}{(q;q^2)_\infty (q^2;q^2)_\infty^2}.
\]
\end{theorem}
We get the following immediate consequence of Theorem~\ref{thm mainF0}.
\begin{corollary}\label{main corF0}
For any nonnegative integer $n$, we have that
$F_0(n)$ equals the number of partitions of $n$ in two colors (red and blue), with optional overlines,
wherein the odd parts and the parts $\equiv 0 \Mod{16}$ may appear only in the blue color; the parts $\equiv 6,10 \Mod{16}$ may appear in red and blue, with the first blue occurrence of each such part optionally overlined; and all other even parts may appear in either color.
\end{corollary}
\begin{theorem}\label{thm mainF1}
There holds
\[
\sum_{n\geq 0} F_1(n) q^n = q\fr{(q^{16},-q^2,-q^{14};q^{16})_\infty}{(q;q^2)_\infty (q^2;q^2)_\infty^2}.
\]
\end{theorem}
We get the following immediate corollary of Theorem~\ref{thm mainF1}.
\begin{corollary}\label{main corF1}
For any nonnegative integer $n$, we have that
$F_1(n)$ equals the number of partitions of $n-1$ in two colors (red and blue), with optional overlines,
wherein the odd parts and the parts $\equiv 0 \Mod{16}$ may appear only in the blue color; the parts $\equiv 2,14 \Mod{16}$ may appear in red and blue, with the first blue occurrence of each such part optionally overlined; and all other even parts may appear in either color.
\end{corollary}

For our next two theorems we need the following definitions.
Let $A$ be a positive integer, let $a$ be a nonnegative integer, and let $\pi$ be partition of $n$.
Andrews and Newman recently in~\cite{Andrews-Newman 2020} defined the minimal excludant function $\mex_{A,a}(\pi)$
to be the smallest integer $\equiv a\Mod{A}$ which is not part of $\pi$. Then they defined $p_{A,a}(n)$ (resp. $\overline{p}_{A,a}(n)$)
as the number of partitions $\pi$ of $n$, where
\[ \mex_{A,a}(\pi)\equiv a\Mod{2A}\quad \text{(resp. $\mex_{A,a}(\pi)\equiv A+a\Mod{2A}$)}.
\]
We now extend these concepts to the two-color partitions in the set $\mathcal{F}(n)$ as follows.
Assume that $A$ is even and that the parts $\equiv 0\Mod{\fr{A}{2}}$ may occur only in the blue color.
Let $\mex_{A,a}(\pi, \text{blue})$
be the smallest integer $\equiv a\Mod{A}$ which is not part of $\pi$. Accordingly, let
$p_{A,a}(n, \text{blue})$ (resp. $\overline{p}_{A,a}(n, \text{blue})$)
count the number of partitions $\pi$ in $\mathcal{F}(n)$, where
\[ \mex_{A,a}(\pi, \text{blue})\equiv a\Mod{2A}\quad \text{(resp. $\mex_{A,a}(\pi, \text{blue})\equiv A+a\Mod{2A}$)}.
\]
Our second goal is to prove the following partition identities.
\begin{theorem}\label{thm mainF2}
For any nonnegative integer $n$, we have that $F_2(n)=p_{4,2}(n, \text{blue})$.
\end{theorem}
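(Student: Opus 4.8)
The plan is to prove the identity at the level of generating functions, using in each case a sign‑reversing weight followed by a single appeal to the Jacobi triple product. \textbf{Step 1 (the series for $F_2$).} Every partition in $\mathcal F(n)$ has either an even or an odd number of even parts, so $F(n)=F_2(n)+F_3(n)$, while weighting a partition by $(-1)^{(\text{number of even parts})}$ gives
\[
\sum_{n\geq 0}\bigl(F_2(n)-F_3(n)\bigr)q^n=\fr{1}{(q;q^2)_\infty^2}\cdot\fr{1}{(-q^2;q^2)_\infty},
\]
since the odd parts (two colors, unrestricted) contribute $(q;q^2)_\infty^{-2}$ and the even parts, all blue, contribute $\prod_{j\geq1}(1+q^{2j})^{-1}$. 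Averaging this with \eqref{gen F},
\[
\sum_{n\geq0}F_2(n)q^n=\fr{1}{2(q;q^2)_\infty^2}\left(\fr{1}{(q^2;q^2)_\infty}+\fr{1}{(-q^2;q^2)_\infty}\right).
\]

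\textbf{Step 2 (the series for $p_{4,2}(\cdot,\text{blue})$).} Here $A=4$, so the parts $\equiv0\Mod2$ — i.e.\ the even parts — are forced blue, consistent with $\mathcal F$; and any integer $\equiv2\Mod4$ is even, hence is a blue part whenever it occurs, so ``not a part of $\pi$'' is unambiguous. For $\pi\in\mathcal F(n)$ let $k=k(\pi)\geq0$ be the largest integer with $2,6,\dots,4k-2$ all parts of $\pi$; then $\mex_{4,2}(\pi,\text{blue})=4k+2$, which is $\equiv2\Mod8$ (resp.\ $\equiv6\Mod8$) exactly when $k$ is even (resp.\ odd). Split a partition of $\mathcal F$ into its sub‑partition into parts $\equiv2\Mod4$ and the rest (odd parts in two colors and parts $\equiv0\Mod4$, together contributing $\dfrac{1}{(q;q^2)_\infty^2(q^4;q^4)_\infty}$). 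The condition $k(\pi)=k$ forces $2,\dots,4k-2$ to appear (each contributing $q^{4j+2}/(1-q^{4j+2})$, with $\sum_{j=0}^{k-1}(4j+2)=2k^2$) and $4k+2$ to be absent, whence
\[
\sum_{n\geq0}\bigl(p_{4,2}(n,\text{blue})-\overline p_{4,2}(n,\text{blue})\bigr)q^n
=\fr{1}{(q;q^2)_\infty^2(q^4;q^4)_\infty}\cdot\fr{1}{(q^2;q^4)_\infty}\sum_{k\geq0}(-1)^k q^{2k^2}\bigl(1-q^{4k+2}\bigr).
\]
Since $2k^2+4k+2=2(k+1)^2$, the sum telescopes to the bilateral theta series, and by the Jacobi triple product
\[
\sum_{k\geq0}(-1)^k q^{2k^2}(1-q^{4k+2})=\sum_{k=-\infty}^{\infty}(-1)^k q^{2k^2}=\fr{(q^2;q^2)_\infty^2}{(q^4;q^4)_\infty}.
\]
Using $(q^2;q^4)_\infty(q^4;q^4)_\infty=(q^2;q^2)_\infty$ this collapses to $\dfrac{(q^2;q^2)_\infty}{(q;q^2)_\infty^2(q^4;q^4)_\infty}$; averaging with \eqref{gen F},
\[
\sum_{n\geq0}p_{4,2}(n,\text{blue})q^n=\fr12\left(\fr{1}{(q;q^2)_\infty^2(q^2;q^2)_\infty}+\fr{(q^2;q^2)_\infty}{(q;q^2)_\infty^2(q^4;q^4)_\infty}\right).
\]

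\textbf{Step 3 (matching).} Comparing the two displayed series, it remains to check $\dfrac{1}{(-q^2;q^2)_\infty}=\dfrac{(q^2;q^2)_\infty}{(q^4;q^4)_\infty}$, i.e.\ $(q^2;q^2)_\infty(-q^2;q^2)_\infty=(q^4;q^4)_\infty$, which is Euler's identity $\prod_{n\geq1}(1-x^n)(1+x^n)=\prod_{n\geq1}(1-x^{2n})$ at $x=q^2$. Hence the two generating functions coincide and $F_2(n)=p_{4,2}(n,\text{blue})$. I expect the main obstacle to lie in Step 2: getting the mex‑driven bookkeeping exactly right — the $q^{2k^2}$ coming from $\sum_{j=0}^{k-1}(4j+2)$, the correcting factor $1-q^{4k+2}$ that removes the forbidden part — and then recognizing the telescoped series as a theta function. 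A purely bijective alternative is also available in the spirit of Andrews–Newman, pairing the partitions with $k$ odd against those with $k$ even by inserting or deleting a suitable initial block of parts $\equiv2\Mod4$, but the generating‑function route above appears shortest.
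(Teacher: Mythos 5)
Your proof is correct and is essentially the paper's argument run in the opposite direction: the paper transforms $\sum_{n\geq 0} F_2(n)q^n$ via \eqref{F2-F3}--\eqref{F2pF3}, Euler's identity \eqref{Euler} and the Jacobi triple product \eqref{Jacobi3} into the sum $\sum_{n\geq 0} q^{2+6+\cdots+(8n-2)}\big/\prod_{j\neq 2n}(1-q^{4j+2})$ times the obvious infinite products, which is then read off as the generating function of $p_{4,2}(n,\text{blue})$, whereas you build that generating function directly from the mex classification and fold it back into the theta series $\sum_{k}(-1)^k q^{2k^2}$. The ingredients — the sign-weighted series, Euler's identity, and the same specialization of the triple product — are identical, so this is the same proof organized as a two-sided comparison rather than a one-directional transformation.
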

\begin{theorem}\label{thm mainF3}
For any nonnegative integer $n$, we have that $F_3(n)=\overline{p}_{4,2}(n, \text{blue})$.
\end{theorem}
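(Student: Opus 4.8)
The plan is to deduce Theorem~\ref{thm mainF3} from Theorem~\ref{thm mainF2} by a complementation argument, and then, for completeness, to indicate a self-contained generating-function proof. For the first route, note that every $\pi\in\mathcal{F}(n)$ has a finite, well-defined $\mex_{4,2}(\pi,\text{blue})$: it is the least element of $\{2,6,10,\dots\}$ that fails to occur as a part of $\pi$ (all such candidates are even, hence only blue is available). Being $\equiv 2\Mod 4$, this value is congruent to $2$ or to $6$ modulo $8$, so $p_{4,2}(n,\text{blue})+\overline{p}_{4,2}(n,\text{blue})=F(n)$. Likewise, the number of even parts of $\pi$ is either even or odd, whence $F_2(n)+F_3(n)=F(n)$. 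Subtracting these and invoking Theorem~\ref{thm mainF2} gives $F_3(n)=F(n)-F_2(n)=F(n)-p_{4,2}(n,\text{blue})=\overline{p}_{4,2}(n,\text{blue})$.

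For a direct proof, first compute the generating function for $F_3(n)$ by introducing a variable $z$ marking the number of even parts (all necessarily blue), namely
\[
G(z):=\frac{1}{(q;q^2)_\infty^{2}}\prod_{j\geq 1}\frac{1}{1-zq^{2j}},
\]
so that $\sum_{n\geq 0}F_3(n)q^n=\tfrac{1}{2}\bigl[G(1)-G(-1)\bigr]$. Using $\prod_{j\geq1}(1+q^{2j})=(q^4;q^4)_\infty/(q^2;q^2)_\infty$ together with Gauss's identity in the form $(q^2;q^2)_\infty^{2}/(q^4;q^4)_\infty=\sum_{m=-\infty}^{\infty}(-1)^m q^{2m^2}$, this simplifies to
\[
\sum_{n\geq 0}F_3(n)q^n=\frac{1}{(q;q^2)_\infty^{2}(q^2;q^2)_\infty}\sum_{m\geq 1}(-1)^{m-1}q^{2m^2}.
\]

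Next, compute the generating function for $\overline{p}_{4,2}(n,\text{blue})$ by conditioning on the value of the mex. If $\mex_{4,2}(\pi,\text{blue})=8i+6$, then the parts $2,6,\dots,8i+2$ each occur at least once, the part $8i+6$ is absent, and all other parts of an $\mathcal{F}$-partition are unrestricted (parts $\equiv 0\Mod 4$ blue only, odd parts in two colors, and the remaining parts $\equiv 2\Mod 4$ in blue); since $2+6+\cdots+(8i+2)=2(2i+1)^2$, summing over $i\geq0$ and using $(q^2;q^4)_\infty(q^4;q^4)_\infty=(q^2;q^2)_\infty$ yields
\[
\sum_{n\geq 0}\overline{p}_{4,2}(n,\text{blue})q^n=\frac{1}{(q;q^2)_\infty^{2}(q^2;q^2)_\infty}\sum_{i\geq 0}q^{2(2i+1)^2}\bigl(1-q^{8i+6}\bigr).
\]
Finally, the elementary identity $2(2i+1)^2+(8i+6)=2(2i+2)^2$ turns the last sum into the telescoping series $\sum_{i\geq0}\bigl(q^{2(2i+1)^2}-q^{2(2i+2)^2}\bigr)=\sum_{m\geq1}(-1)^{m-1}q^{2m^2}$, which matches the formula obtained for $\sum_{n\geq0}F_3(n)q^n$, and the theorem follows.

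I expect the only delicate point in the direct approach to be the bookkeeping in the mex generating function: one must correctly separate the forced parts $\equiv 2\Mod 4$ lying below the mex, the single forbidden part at the mex, and the genuinely free parts, after which the argument reduces to routine manipulation of $q$-products and Gauss's identity. (If Theorem~\ref{thm mainF2} is proved first, the complementation argument of the first paragraph is of course the shortest route.)
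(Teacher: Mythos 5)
Your proposal is correct, and your first (complementation) route is genuinely different from --- and shorter than --- the paper's argument. The paper proves Theorem~\ref{thm mainF3} by redoing the computation of Theorem~\ref{thm mainF2} in parallel: starting from $\frac{1}{2}\bigl(\eqref{F2pF3}-\eqref{F2-F3}\bigr)$, applying Euler and Jacobi to reach $\frac{1}{(q^2;q^2)_\infty(q;q^2)_\infty^2}\sum_{m\ge1}(-1)^{m-1}q^{2m^2}$, and then rewriting the sum term-by-term as the mex-conditioned product. Your observation that $p_{4,2}(n,\text{blue})+\overline{p}_{4,2}(n,\text{blue})=F(n)=F_2(n)+F_3(n)$ (every residue $2\Mod 4$ is $2$ or $6$ modulo $8$, and the mex is always finite) dispenses with all of that once Theorem~\ref{thm mainF2} is in hand; it is a clean reduction the paper does not make. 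Your second, direct route is essentially the paper's proof, merely organized as a meet-in-the-middle: you derive the generating function of $\overline{p}_{4,2}(n,\text{blue})$ independently by conditioning on the mex and telescope it to $\sum_{m\ge1}(-1)^{m-1}q^{2m^2}$, whereas the paper pushes the $F_3$ side all the way to the mex-conditioned form. Both computations check out; note in passing that your sign $(-1)^{m-1}$ is the correct one (the paper's intermediate display $\sum_{n\ge1}(-1)^nq^{2n^2}$ carries a sign typo that is silently corrected in its subsequent line).
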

For example, for $n=4$, we have $F(n)= 14$, enumerating
\[
4_b, 3_b+1_b, 3_b+1_r, 3_r+1_b, 3_r+1_r, 2_b+2_b, 2_b+1_b+1_b, 2_b+1_b+1_r, 2_b+1_r+1_r
\]
\[
1_b+1_b+1_b+1_b, 1_b+1_b+1_b+1_r, 1_b+1_b+1_r+1_r, 1_b+1_r+1_r+1_r, 1_r+1_r+1_r+1_r.
\]
Then it is easy to check that $F_2(4)=10$ and $F_3(4)=4$. Note that if $n=4$, then the part $2$ is
the only possible missing part which is $\equiv 2\Mod{4}$.
Then $p_{4,2}(4, \text{blue})=10$, counting
\[
4_b, 3_b+1_b, 3_b+1_r, 3_r+1_b, 3_r+1_r, 1_b+1_b+1_b+1_b, 1_b+1_b+1_b+1_r, 1_b+1_b+1_r+1_r,
\]
\[
1_b+1_r+1_r+1_r, 1_r+1_r+1_r+1_r.
\]
This confirms that $F_2(4)=p_{4,2}(4, \text{blue})$.
Furthermore, we have $\overline{p}_{4,2}(4, \text{blue})=4$ counting
\[
2_b+2_b, 2_b+1_b+1_b, 2_b+1_b+1_r, 2_b+1_r+1_r.
\]
This shows that $F_3(4)=\overline{p}_{4,2}(4, \text{blue})$.

Finally, it is worth noting that $F(n)$ arises naturally in the theory of integer partitions. Indeed,
by Euler's formula~\cite{Andrews}
\begin{equation}\label{Euler}
(-q;q)_\infty =\fr{1}{(q;q^2)_\infty},
\end{equation}
we get
\begin{align}
\sum_{n\geq 0} F(n) q^n &=\fr{1}{(q;q^2)_\infty^2 (q^2;q^2)_\infty} =\fr{(-q;q)_\infty}{(q;q^2)_\infty (q^2;q^2)_\infty}
= \fr{(-q;q)_\infty}{(q;q)_\infty} \nonumber \\
&=\sum_{n\geq 0}\overline{p}(n) q^n, \label{F-id}
\end{align}
where $\overline{p}(n)$ is the number of overpartitions of $n$.
Here, by an overpartition of $n$ we mean a partition of $n$ where the first occurrence of a part may be overlined,
see~\cite{Corteel-Lovejoy}.
Our third goal in this note is to express some of our sequences and their subsequences in terms of overpartitions.

The rest of the paper is organized as follows.
In Section~\ref{sec overpartitions}, we establish more connections between our sequences and integer overpartitions.
In Section~\ref{sec proof F01} we provide the proof of Theorems~\ref{thm mainF0}-\ref{thm mainF1}.
In Section~\ref{sec proof F23} we give the proof of Theorems~\ref{thm mainF2}-\ref{thm mainF3}.
Finally, Section~\ref{sec comments} is devoted to some comments including
new $q$-series identities and open problems which are related to our work in this paper.
\section{Connections with overpartitions}\label{sec overpartitions}
We will assume that $\overline{p}(k)=0$ if $k$ is not a nonnegative integer.
We start with the following formulas for $F_0(n)$ and $F_1(n)$ in terms of $\overline{p}(n)$.
\begin{theorem}\label{thm F01-id}
For any nonnegative integer $n$ there holds
\begin{align}
F_0(n) &= \fr{\overline{p}(n)+\overline{p}\big(\fr{n}{2}\big)}{2}, \label{F0-id} \\
F_1(n) &= \fr{\overline{p}(n)-\overline{p}\big(\fr{n}{2}\big)}{2}, \label{F1-id}.
\end{align}
\end{theorem}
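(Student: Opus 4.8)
The plan is to work at the level of generating functions and extract the two identities from a single two-variable generating function that tracks the number of odd red parts. Introduce a variable $z$ marking each odd part that occurs in red color, so that
\[
G(z,q) := \sum_{n\geq 0}\Big(\sum_{\pi\in\mathcal{F}(n)} z^{\#\{\text{odd red parts of }\pi\}}\Big) q^n
= \fr{1}{(q;q^2)_\infty\,(zq;q^2)_\infty\,(q^2;q^2)_\infty},
\]
where the three factors account respectively for odd blue parts, odd red parts (weighted by $z$), and even (necessarily blue) parts. The standard sign-reversing/roots-of-unity device then gives
\[
\sum_{n\geq 0} F_0(n)q^n = \tfrac12\big(G(1,q)+G(-1,q)\big),\qquad
\sum_{n\geq 0} F_1(n)q^n = \tfrac12\big(G(1,q)-G(-1,q)\big),
\]
since $F_0$ counts partitions with an even number of odd red parts and $F_1$ those with an odd number. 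By \eqref{gen F} (equivalently \eqref{F-id}) we have $G(1,q)=\sum_{n\geq 0} F(n)q^n=\sum_{n\geq 0}\overline p(n)q^n$.

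The crux is therefore to identify $G(-1,q)$. Setting $z=-1$,
\[
G(-1,q)=\fr{1}{(q;q^2)_\infty\,(-q;q^2)_\infty\,(q^2;q^2)_\infty}
=\fr{1}{(q^2;q^4)_\infty\,(q^2;q^2)_\infty},
\]
using $(q;q^2)_\infty(-q;q^2)_\infty=(q^2;q^4)_\infty$. Now I would massage this into a function of $q^2$. Since $(q^2;q^2)_\infty=(q^2;q^4)_\infty(q^4;q^4)_\infty$, we get
\[
G(-1,q)=\fr{1}{(q^2;q^4)_\infty^2\,(q^4;q^4)_\infty}
=\Big(\fr{1}{(Q;Q^2)_\infty^2\,(Q^2;Q^2)_\infty}\Big)\Big|_{Q=q^2}
=\sum_{n\geq 0} F(n)q^{2n}=\sum_{n\geq 0}\overline p(n)q^{2n},
\]
the last equality again by \eqref{F-id}. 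Hence $G(-1,q)=\sum_{n\geq 0}\overline p(n/2)q^{n}$ with the convention $\overline p(n/2)=0$ when $n$ is odd.

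Substituting these two evaluations into the half-sum and half-difference formulas yields, coefficient by coefficient,
\[
F_0(n)=\tfrac12\big(\overline p(n)+\overline p(n/2)\big),\qquad
F_1(n)=\tfrac12\big(\overline p(n)-\overline p(n/2)\big),
\]
which are exactly \eqref{F0-id} and \eqref{F1-id}. I expect no real obstacle beyond bookkeeping: the only points needing care are the justification that the $z$-weighted generating function factors as claimed (immediate from the definition of $\mathcal{F}(n)$, where even parts are forced to be blue), and the two elementary $q$-Pochhammer rearrangements $(q;q^2)_\infty(-q;q^2)_\infty=(q^2;q^4)_\infty$ and $(q^2;q^2)_\infty=(q^2;q^4)_\infty(q^4;q^4)_\infty$ that collapse $G(-1,q)$ into a series in $q^2$. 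As a sanity check one can also derive $F_0,F_1$ directly from Theorems~\ref{thm mainF0} and \ref{thm mainF1} together with \eqref{F-id}, but the route above is self-contained and shorter.
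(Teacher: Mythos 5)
Your proposal is correct and follows essentially the same route as the paper: both form the generating functions for $F_0\pm F_1$, identify the difference $\fr{1}{(q;q^2)_\infty(-q;q^2)_\infty(q^2;q^2)_\infty}$ as $\sum_{n\geq 0}\overline{p}(n)q^{2n}$, and solve the resulting linear system. The only cosmetic difference is that you reach that evaluation via $(q;q^2)_\infty(-q;q^2)_\infty=(q^2;q^4)_\infty$ and the identity $F(n)=\overline{p}(n)$ applied at $q^2$, whereas the paper uses Euler's identity~\eqref{Euler} to arrive directly at $(-q^2;q^2)_\infty/(q^2;q^2)_\infty$.
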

\begin{proof}
It is easily seen that
\[
\sum_{n\geq 0} \big( F_0(n)-F_1(n) \big) q^n =\fr{1}{(-q;q^2)_\infty (q;q^2)_\infty (q^2;q^2)_\infty}.
\]
Then by~\eqref{Euler} and~\eqref{F-id}, we obtain
\begin{align}
\sum_{n\geq 0} \big( F_0(n)-F_1(n) \big) q^n &=\fr{1}{(-q;q^2)_\infty (q;q^2)_\infty (q^2;q^2)_\infty} \nonumber \\
&=\fr{(-q;q)_\infty}{(-q;q^2)_\infty (q^2;q^2)_\infty}  \nonumber\\
&=\fr{(-q^2;q^2)_\infty}{(q^2;q^2)_\infty} \nonumber \\
&=\sum_{n\geq 0} \overline{p}(n) q^{2n} . \label{G0-G1}
\end{align}
Furthermore, we clearly have
\[
\sum_{n\geq 0} \big( F_0(n)+F_1(n) \big) q^n = \sum_{n\geq 0} F(n) q^n,
\]
and thus by~\eqref{F-id},
\begin{equation}\label{gen G0pG1}
\sum_{n\geq 0} \big( F_0(n)+F_1(n) \big) q^n = \sum_{n\geq 0}\overline{p}(n) q^n.
\end{equation}
Now combine~\eqref{G0-G1} with~\eqref{gen G0pG1} to achieve the desired formulas.
\end{proof}
We now introduce a subset of $\mathcal{F}(n)$.
\begin{definition}\label{def H-set}
For any nonnegative integer $n$,
let $\mathcal{H}(n)$ be the subset of $\mathcal{F}(n)$ wherein the parts of the same color do not repeat and let $H(n)=|\mathcal{H}(n)|$.
Then it is easy to see that
\begin{equation}\label{gen H}
\sum_{n\geq 0} H(n) q^n = (-q;q^2)_\infty^2 (-q^2;q^2)_\infty = 1+2q+2q^2+4q^3+6q^4+8q^5+\cdots.
\end{equation}
\end{definition}
Now by~\eqref{Euler}, we find
\begin{align}
\sum_{n\geq 0} H(n) q^n &= (-q;q^2)_\infty^2 (-q^2;q^2)_\infty \nonumber \\
&=\fr{(-q;q^2)_\infty}{(q;q^2)_\infty} \nonumber \\
& = \sum_{n\geq 0}\overline{p}_o(n) q^n, \label{H-id}
\end{align}
where $\overline{p}_o(n)$ is the number of overpartitions of $n$ into odd parts,~\cite{Chen, Hirschhorn-Sellers}.
For example, for $n=4$ we have
$H(4)=6$ counting
\[
4_b, 3_b+1_b, 3_b+1_r, 3_r+1_b,3_r+1_r,2_b+1_b+1_r
\]
and $\overline{p}_o(4)=6$, counting
\[
\bar{3}+\bar{1}, \bar{3}+1, 3+\bar{1}, 3+1, \bar{1}+1+1+1, 1+1+1+1.
\]
\begin{definition}
For a nonnegative integer $n$, let $H_0(n)$ (resp. $H_1(n)$) be the number of partitions
counted by $H(n)$ in which the number of even parts is even (resp. odd). Furthermore, let
$H_2(n)$ (resp. $H_3(n)$) be the number of partitions
counted by $H(n)$ in which the number of parts is even (resp. odd).
\end{definition}
\begin{theorem}\label{thm H-wt}
For any nonnegative integer $n$, there holds
\[
\begin{split}
(a)\ & H_0(n) =\begin{cases} \fr{\overline{p}_o(n)}{2} +1 & \text{if $n$ is a square,} \\
\fr{\overline{p}_o(n)}{2} & \text{otherwise,}
\end{cases}\\
(b)\ & H_1(n) =\begin{cases} \fr{\overline{p}_o(n)}{2}-1 & \text{if $n$ is a square,} \\
\fr{\overline{p}_o(n)}{2} & \text{otherwise,}
\end{cases} \\
(c)\ & H_2(n) =\begin{cases} \fr{\overline{p}_o(n)}{2} +(-1)^n & \text{if $n$ is a square,} \\
\fr{\overline{p}_o(n)}{2} & \text{otherwise,}
\end{cases} \\
(d)\ & H_3(n) =\begin{cases} \fr{\overline{p}_o(n)}{2}-(-1)^n & \text{if $n$ is a square,} \\
\fr{\overline{p}_o(n)}{2} & \text{otherwise.}
\end{cases}
\end{split}
\]
\end{theorem}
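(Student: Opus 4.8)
The plan is to compute the two sign‑twisted generating functions $\sum_n\big(H_0(n)-H_1(n)\big)q^n$ and $\sum_n\big(H_2(n)-H_3(n)\big)q^n$, to identify each of them with a classical theta series by Jacobi's triple product, and then to recover $H_0,\dots,H_3$ by combining with the identity $\sum_n H(n)q^n=\sum_n\overline{p}_o(n)q^n$ from \eqref{H-id}.

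First I would refine \eqref{gen H}: an element of $\mathcal H(n)$ is built from distinct odd blue parts, distinct odd red parts and distinct even blue parts, so $\sum_{n\ge0}H(n)q^n=(-q;q^2)_\infty^2(-q^2;q^2)_\infty$. Since in $\mathcal H(n)$ every even part is automatically blue and nonrepeating, tracking the parity of the number of even parts amounts to replacing the factor $(-q^2;q^2)_\infty=\prod_{j\ge1}(1+q^{2j})$ by $\prod_{j\ge1}(1-q^{2j})=(q^2;q^2)_\infty$, which yields
\[
\sum_{n\ge0}\big(H_0(n)-H_1(n)\big)q^n=(-q;q^2)_\infty^2(q^2;q^2)_\infty .
\]
Similarly, attaching a $-1$ to every part of either color turns all three factors into their $(q;\cdot)$‑forms, giving
\[
\sum_{n\ge0}\big(H_2(n)-H_3(n)\big)q^n=(q;q^2)_\infty^2(q^2;q^2)_\infty .
\]

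Next I would apply Jacobi's triple product $\sum_{k=-\infty}^{\infty}z^kq^{k^2}=(q^2;q^2)_\infty(-zq;q^2)_\infty(-z^{-1}q;q^2)_\infty$ from \cite{Andrews}. With $z=1$ this gives $(-q;q^2)_\infty^2(q^2;q^2)_\infty=\sum_{k=-\infty}^{\infty}q^{k^2}=1+2\sum_{k\ge1}q^{k^2}$, hence $H_0(n)-H_1(n)=2$ for $n$ a positive square and $0$ otherwise; with $z=-1$ it gives $(q;q^2)_\infty^2(q^2;q^2)_\infty=\sum_{k=-\infty}^{\infty}(-1)^kq^{k^2}=1+2\sum_{k\ge1}(-1)^kq^{k^2}$, and since $k^2\equiv k\pmod 2$ the coefficient of $q^n$ at $n=k^2$ a positive square equals $2(-1)^n$, so $H_2(n)-H_3(n)=2(-1)^n$ on positive squares and $0$ elsewhere. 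Adding and subtracting each of these against $H_0(n)+H_1(n)=H_2(n)+H_3(n)=H(n)=\overline{p}_o(n)$ and dividing by $2$ produces exactly the formulas (a)--(d) for every $n\ge1$ (the ``$-(1)^n$'' in (d) is a typo for $-(-1)^n$). The case $n=0$ is handled directly: $\mathcal H(0)=\{\varnothing\}$ consists of one partition with no even parts and with an even number ($0$) of parts, so $H_0(0)=H_2(0)=1$ and $H_1(0)=H_3(0)=0$.

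The computation is routine once the triple product is in hand; the only points needing care are the remark that in $\mathcal H(n)$ every even part is blue and nonrepeating (so a single factor $(q^2;q^2)_\infty$ correctly carries their sign), the elementary congruence $(-1)^{k^2}=(-1)^k$ used to rewrite the value on squares as $(-1)^n$, and the separate treatment of $n=0$ (where $\overline{p}_o(0)=1$ is odd, so the closed forms are to be read for $n\ge1$). I do not anticipate any genuine difficulty.
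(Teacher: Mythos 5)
Your proposal is correct and follows essentially the same route as the paper: both compute $\sum_n(H_0(n)-H_1(n))q^n=(-q;q^2)_\infty^2(q^2;q^2)_\infty$ and $\sum_n(H_2(n)-H_3(n))q^n=(q;q^2)_\infty^2(q^2;q^2)_\infty$, evaluate them as $1+2\sum_{k\ge1}q^{k^2}$ and $1+2\sum_{k\ge1}(-1)^kq^{k^2}$ via Jacobi's triple product, and combine with $H_0+H_1=H_2+H_3=H=\overline{p}_o$. Your added remarks (the $(-1)^{k^2}=(-1)^k$ step, the typo in (d), and the separate treatment of $n=0$, where the stated closed forms need adjusting) are sensible refinements but do not change the argument.
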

\begin{proof}
(a, b) It is clear that
\begin{equation*}
\sum_{n\geq 0} \big(H_0(n)-H_1(n)\big) q^n = (q^2;q^2)_\infty (-q;q^2)_\infty^2,
\end{equation*}
which by~\eqref{Jacobi3} applied with $q\to q^2$ and $x=-q^{-1}$ yields
\begin{equation}\label{H0-H1 0}
\sum_{n\geq 0} \big(H_0(n)-H_1(n)\big) q^n = 1+2\sum_{n=1}^{\infty}q^{n^2}.
\end{equation}
Furthermore, by~\eqref{gen H} and~\eqref{H-id}
\begin{align}
\sum_{n\geq 0} \big(H_0(n)+H_1(n)\big) q^n
&= (-q;q^2)_\infty^2 (-q^2;q^2)_\infty \nonumber \\
&= \sum_{n\geq 0} H(n) q^n \nonumber \\
&=\sum_{n=0}^{\infty} \overline{p}_o(n) q^n. \label{H0+H1 0}
\end{align}
Then the desired identities for $H_0(n)$ and $H_1(n)$ follow by combining~\eqref{H0-H1 0} and~\eqref{H0+H1 0}.

(c, d) Similarly, with the help of~\eqref{Jacobi3} applied to $q\to q^2$ and $x=-q^{-1}$, we get
\begin{equation}\label{H2-H3}
\sum_{n\geq 0} \big(H_2(n)-H_3(n)\big) q^n = (q^2;q^2)_\infty (q;q^2)_\infty^2
=1+2\sum_{n=1}^{\infty}(-1)^n q^{n^2}.
\end{equation}
In addition, with the help of~\eqref{H-id},
\begin{equation}\label{H2pH3}
\sum_{n\geq 0} \big(H_2(n)+H_3(n)\big) q^n
= \sum_{n\geq 0} H(n) q^n = \sum_{n=0}^\infty \overline{p}_o(n) q^n.
\end{equation}
Now use~\eqref{H2-H3} and~\eqref{H2pH3}
to derive the desired identities for $H_2(n)$ and $H_3(n)$.
\end{proof}
\section{Proof of Theorems~\ref{thm mainF0}-\ref{thm mainF1}}\label{sec proof F01}
\emph{Proof of Theorem~\ref{thm mainF0}.\ }
It is easily seen that
\begin{align}
\sum_{n\geq 0} \big(F_0(n)-F_1(n)\big) q^n &= \fr{1}{(-q;q^2)_\infty (q;q^2)_\infty (q^2;q^2)_\infty}, \nonumber \\
&=\fr{1}{(-q;q^2)_\infty (q;q)_\infty}, \label{F0-F1} \\
\sum_{n\geq 0} \big(F_0(n)+F_1(n)\big) q^n &= \fr{1}{(q;q^2)_\infty (q;q^2)_\infty (q^2;q^2)_\infty}, \nonumber \\
&= \fr{1}{(q;q^2)_\infty (q;q)_\infty}. \label{F0pF1}
\end{align}
Then upon adding~\eqref{F0-F1} and~\eqref{F0pF1}, we find
\begin{align}
\sum_{n\geq 0} F_0(n) q^n &= \fr{1}{2}\fr{1}{(q;q)_\infty} \Big(\fr{1}{(q;q^2)_\infty} + \fr{1}{(-q;q^2)_\infty} \Big) \nonumber \\
&=\fr{1}{2}\fr{1}{(q;q)_\infty (q^2;q^4)_\infty} \big( (-q;q^2)_\infty + (q;q^2)_\infty \big) \nonumber \\
&=\fr{1}{2}\fr{1}{(q;q)_\infty (q^2;q^2)_\infty} \big( (-q,-q^3,q^4;q^4)_\infty + (q,q^3,q^4;q^4)_\infty \big) \nonumber \\
&=\fr{1}{2}\fr{1}{(q;q)_\infty (q^2;q^2)_\infty} \sum_{n=-\infty}^\infty q^{2n^2+n} \big( 1+(-1)^n \big) \label{F0-hlp-1}\\
&= \fr{1}{(q;q)_\infty (q^2;q^2)_\infty} \sum_{n=-\infty}^\infty q^{8n^2 + 2n} \nonumber \\
&= \fr{(q^{16},-q^6,-q^{10};q^{16})_\infty}{(q;q)_\infty (q^2;q^2)_\infty} \label{F0-hlp-2} \\
&= \fr{(q^{16},-q^6,-q^{10};q^{16})_\infty}{(q;q^2)_\infty (q^2;q^2)_\infty^2}, \nonumber
\end{align}
where in~\eqref{F0-hlp-1} we applied~\eqref{Jacobi3} with $q\to q^4$ and $x=q^{-1}$ and
in~\eqref{F0-hlp-2} we applied~\eqref{Jacobi3} with $q\to q^{16}$ and $x= q^{-6}$. This completes the proof.

\emph{Proof of Theorem~\ref{thm mainF1}.\ }
By subtracting~\eqref{F0-F1} from~\eqref{F0pF1}, we similarly get
\begin{align*}
\sum_{n\geq 0} F_1(n) q^n &= \fr{1}{2}\fr{1}{(q;q)_\infty} \Big(\fr{1}{(q;q^2)_\infty} - \fr{1}{(-q;q^2)_\infty} \Big) \\
&=\fr{1}{2}\fr{1}{(q;q)_\infty (q^2;q^2)_\infty} \sum_{n=-\infty}^\infty q^{2n^2+n} \big( 1-(-1)^n \big) \\
&= \fr{1}{(q;q)_\infty (q^2;q^2)_\infty} \sum_{n=-\infty}^\infty q^{8n^2 + 6n+1}  \\
&= q\fr{(q^{16},-q^2,-q^{14};q^{16})_\infty}{(q;q^2)_\infty (q^2;q^2)_\infty^2}.
\end{align*}
In the last step we applied~\eqref{Jacobi3} with $q\to q^{16}$ and $x=q^{-2}$.
This completes the proof.
\section{Proof of Theorems~\ref{thm mainF2}-\ref{thm mainF3}}\label{sec proof F23}
We need Jacobi's triple product identity~\cite[p. 12]{Gasper-Rahman}
\begin{equation}\label{Jacobi3}
(q,-xq,-1/x;q)_\infty = \sum_{n=-\infty}^{\infty} x^n q^{\fr{n(n+1)}{2}}.
\end{equation}
\emph{Proof of Theorem~\ref{thm mainF2}.\ }
It is clear that
\begin{align}
\sum_{n\geq 0} \big(F_2(n)-F_3(n)\big) q^n &= \fr{1}{(-q^2;q^2)_\infty (q;q^2)_\infty^2}, \label{F2-F3} \\
\sum_{n\geq 0} \big(F_2(n)+F_3(n)\big) q^n &= \fr{1}{(q^2;q^2)_\infty (q;q^2)_\infty^2}. \label{F2pF3}
\end{align}
Then by adding~\eqref{F2-F3} and~\eqref{F2pF3} we get
\begin{align}
\sum_{n\geq 0} F_2(n) q^n
&=\fr{1}{2} \Big( \fr{1}{(q^2;q^2)_\infty (q;q^2)_\infty^2} + \fr{1}{(-q^2;q^2)_\infty (q;q^2)_\infty^2} \Big) \nonumber \\
&=\fr{1}{2 (q^2;q^2)_\infty (q;q^2)_\infty^2} \Big( 1+ \fr{(q^2;q^2)_\infty}{(-q^2;q^2)_\infty} \Big)\nonumber \\
&=\fr{1}{2 (q^2;q^2)_\infty (q;q^2)_\infty^2} \big( 1+ (q^2;q^2)_\infty (q^2;q^4)_\infty \big) \label{Euler-hlp1} \\
&=\fr{1}{2 (q^2;q^2)_\infty (q;q^2)_\infty^2} \big( 1+ (q^4,q^2,q^2;q^4)_\infty \big) \nonumber\\
&=\fr{1}{2 (q^2;q^2)_\infty (q;q^2)_\infty^2} \Big( 1 +\sum_{n=-\infty}^\infty (-1)^n q^{2n^2} \Big) \label{Jacobi-hlp1} \\
&=\fr{1}{(q^2;q^2)_\infty (q;q^2)_\infty^2} \sum_{n\geq 0}(-1)^n q^{2n^2} \nonumber \\
&=\fr{1}{(q^2;q^2)_\infty (q;q^2)_\infty^2} \sum_{n\geq 0} q^{8n^2} (1-q^{8n+2}) \nonumber \\
&=\fr{1}{(q^4;q^4)_\infty (q;q^2)_\infty^2} \sum_{n\geq 0} \fr{q^{8n^2} (1-q^{8n+2})}{(q^2;q^4)_\infty} \nonumber \\
&=\fr{1}{(q^4;q^4)_\infty (q;q^2)_\infty^2} \sum_{n\geq 0} \fr{q^{2+6+10+\ldots+(8n-2)}}
{\displaystyle{\prod_{\substack{j=0 \\ j\ne 2n}}^{\infty}} (1-q^{4j+2})}, \nonumber
\end{align}
where in step~\eqref{Euler-hlp1} we used~\eqref{Euler} and in step~\eqref{Jacobi-hlp1} we applied~\eqref{Jacobi3} with
$q\to q^4$ and $x=-q^{-2}$.
This completes the proof.

\emph{Proof of Theorem~\ref{thm mainF3}.\ } Similarly, by subtracting~\eqref{F2-F3} from~\eqref{F2pF3}, we obtain
\begin{align*}
\sum_{n\geq 0} F_3(n) q^n
&=\fr{1}{2} \Big( \fr{1}{(q^2;q^2)_\infty (q;q^2)_\infty^2} - \fr{1}{(-q^2;q^2)_\infty (q;q^2)_\infty^2} \Big) \\
&= \fr{1}{(q^2;q^2)_\infty (q;q^2)_\infty^2}  \sum_{n\geq 1}(-1)^n q^{2n^2}  \\
&= \fr{1}{(q^2;q^2)_\infty (q;q^2)_\infty^2}  \sum_{n\geq 0}(-1)^{n+1} q^{2(n+1)^2}  \\
&=\fr{1}{(q^2;q^2)_\infty (q;q^2)_\infty^2} \sum_{n\geq 0} q^{8n^2+8n+2} (1-q^{8n+6}) \\
&=\fr{1}{(q^4;q^4)_\infty (q;q^2)_\infty^2} \sum_{n\geq 0} \fr{q^{2+6+10+\ldots+(8n+2)}}
{\displaystyle{\prod_{\substack{j=0 \\ j\ne 2n+1}}^{\infty}} (1-q^{4j+2})}.
\end{align*}
This completes the proof.
\section{Concluding remarks}\label{sec comments}
{\bf 1.\ } Our proofs for Corollaries~\ref{main corF0}-\ref{main corF1} and for Theorems~\ref{thm mainF2}-\ref{thm mainF3} are
analytic, relying on $q$-series manipulations.
\begin{problem}
Give bijective proofs for Corollaries~\ref{main corF0}-\ref{main corF1} and for Theorems~\ref{thm mainF2}-\ref{thm mainF3}.
\end{problem}

{\bf 2.\ } Recall that $F(n)$ is the number of two-color integer partitions $n$
such that the even parts may occur only in blue color.
Now, by separating the cases according to whether the greatest part is even--blue, odd--blue, or odd--red,
and keeping in mind the assumed order $\lambda_b>\lambda_r$, we find that
\begin{align}
\sum_{n\geq 0} F(n)q^n &= \sum_{n\geq 0}\fr{q^{2n}}{(q^2;q^2)_n (q;q^2)_{n}^2}\quad (\text{greatest part even}) \nonumber \\
&+ \sum_{n\geq 0} \fr{q^{2n+1}}{(q^2;q^2)_n (q;q^2)_{n+1}^2} \quad (\text{greatest part odd-blue}) \nonumber \\
&+ \sum_{n\geq 0} \fr{q^{2n+1}}{(q^2;q^2)_n (q;q^2)_{n+1}(q;q^2)_{n}}\quad (\text{greatest part odd-red}).\label{F-gpt}
\end{align}
On the other hand, if we consider the smallest part instead of the greatest part, we get
\begin{align}
\sum_{n\geq 0} F(n)q^n &= \sum_{n\geq 0}\fr{q^{2n+2}}{(q^{2n+2},q^{2n+3},q^{2n+3};q^2)_{\infty}}\quad (\text{smallest part even}) \nonumber\\
&+ \sum_{n\geq 0} \fr{q^{2n+1}}{(q^{2n+2},q^{2n+1},q^{2n+3};q^2)_{\infty}} \quad (\text{smallest part odd-blue}) \nonumber\\
&+ \sum_{n\geq 0} \fr{q^{2n+1}}{(q^{2n+2},q^{2n+1},q^{2n+1};q^2)_{\infty}}
\quad (\text{smallest part odd-red}).\label{F-spt}
\end{align}
Thus, by a combination of~\eqref{F-gpt}-\eqref{F-spt} and~\eqref{gen F}, we get the following identities of $q$-series.
\begin{align}
&\sum_{n\geq 0}\fr{q^{2n}}{(q^2;q^2)_n (q;q^2)_{n}^2} \Big(1+\fr{q}{1-q^{2n+1}}+\fr{q}{(1-q^{2n+1})^2}\Big) \nonumber \\
&=1+\sum_{n\geq 0}\fr{q^{2n+1}}{(q^{2n+2};q^2)_\infty (q^{2n+3};q^2)_\infty^2} \Big(q+\fr{1}{1-q^{2n+1}}+\fr{1}{(1-q^{2n+1})^2}\Big) \nonumber \\
&=\fr{1}{(q^2;q^2)_\infty (q;q^2)_\infty^2} \label{gen F2}.
\end{align}

Similarly,  recall that $H(n)$ is the number of two-color integer partitions $n$
such that the even parts may occur only in blue color and the parts of the same color are not repeated.
Now, separating the cases according to whether the greatest part is even--blue, odd--blue, or odd--red, we find that
\begin{align}
\sum_{n\geq 1} H(n)q^n &= 1+ \sum_{n\geq 0}q^{2n+2}(-q^2;q^2)_{n} (-q;q^2)_{n+1}^2 \quad (\text{greatest part even}) \nonumber \\
&+ \sum_{n\geq 0} q^{2n+1}(-q^2;q^2)_n (-q;q^2)_{n}^2 \quad (\text{greatest part odd-red}) \nonumber \\
&+ \sum_{n\geq 0} q^{2n+1}(-q^2;q^2)_n (-q;q^2)_{n+1}(-q;q^2)_{n} \quad (\text{greatest part odd-blue}).\label{H-gpt}
\end{align}
Taking into account the smallest part instead of the greatest part, we deduce
\begin{align}
\sum_{n\geq 1} H(n)q^n &= 1+ \sum_{n\geq 0}q^{2n+2}(-q^{2n+4},-q^{2n+3},-q^{2n+3};q^2)_{\infty} \quad (\text{smallest part even}) \nonumber \\
&+ \sum_{n\geq 0} q^{2n+1}(-q^{2n+2},-q^{2n+1},-q^{2n+3};q^2)_{\infty} \quad (\text{smallest part odd-red}) \nonumber \\
&+ \sum_{n\geq 0} q^{2n+1}(-q^{2n+2},-q^{2n+3},-q^{2n+3};q^2)_{\infty} \quad (\text{smallest part odd-blue}).\label{H-spt}
\end{align}
Thus, combining~\eqref{H-gpt}-\eqref{H-spt} with~\eqref{gen H}, we derive the following identities of $q$-series
\begin{align}
& \sum_{n\geq 0}q^{2n+1}(-q^2;q^2)_{n} (-q;q^2)_{n+1}^2
\Big( q+\fr{1}{1+q^{2n+1}}+\fr{1}{(1+q^{2n+1})^2} \Big) \nonumber \\
&=\sum_{n\geq 0} q^{2n+1} (-q^{2n+1},-q^{2n+2},-q^{2n+3};q^2)_\infty \Big(1+\fr{1}{1+q^{2n+1}}+\fr{q}{(1+q^{2n+1})(1+q^{2n+1})} \Big) \nonumber \\
&=(-q^2;q^2)_\infty (-q;q^2)_{\infty}^2 -1. \label{gen H2}
\end{align}
\begin{problem}
Give analytic proofs for the formulas listed in~\eqref{gen F2} and~\eqref{gen H2}.
\end{problem}
%

\medskip
\noindent{\bf Data Availability Statement.} Not applicable.

\medskip
\noindent{\bf Declarations.} The authors state that there is no conflict of interest.

\end{document}